\numberwithin{equation}{section}
\newtheorem{thm}{Theorem}[section]
\newtheorem{lem}[thm]{Lemma}
\newtheorem{cor}[thm]{Corollary}
\newtheorem{defin}[thm]{Definition}
\newtheorem{remark}[thm]{Remark}
\begin{document}

\begin{center}
\textbf{{\large {\ Inverse problem for fractional order subdiffusion equation }}}\\[0pt]
\medskip \textbf{Marjona Shakarova$^{1}$}\\[0pt]
\textit{shakarova2104@gmail.com\\[0pt]}
\medskip \textit{\ $^{1}$ Institute of Mathematics, Academy of Science of Uzbekistan}

\end{center}

\textbf{Abstract}: The study examines the inverse problem of finding the appropriate right-hand side for the subdiffusion equation with the Caputo fractional derivative in a Hilbert space represented by $H$. The right-hand side of the equation has the form $g(t)f$ and an element $f\in H$ is unknown. If the sign of $g(t)$ is a constant, then the existence and uniqueness of the solution is proved. When $g(t)$ changes sign, then in some cases, the existence and uniqueness of the solution is proved, in other cases, we found the necessary and sufficient condition for a solution to exist. Obviously, we need an extra condition to solve this inverse problem. We take the additional condition in the form $\int\limits_0^Tu(t)dt=\psi$. Here $\psi $ is a given element, of $H$. 

\vskip 0.3cm \noindent {\it AMS 2000 Mathematics Subject
Classifications} :
Primary 35R11; Secondary 34A12.\\
{\it Key words}:  subdiffusion equation, inverse problem, the Caputo derivative, Fourier method. 

\section{Introduction}

Suppose that $H$ is a separable Hilbert space  with the scalar product $(\cdot, \cdot)$, and let $A$ be an operator on $H$, with a domain of definition  $D(A)$, satisfying the following conditions:

1) $A=A^*$, where $A^*$ denotes the adjoint operator of $A$,

2) $(Ah,h)\geq C(h,h)$, $h \in D(A)$, for some $C>0$.

Assume that $A$ has a complete system of orthonormal eigenfunctions ${v_k}$ in $H$ and a countable set of positive eigenvalues $\lambda_k$. It is assumed that the eigenvalues are ordered such that $0<\lambda_1\leq\lambda_2\leq \cdots\rightarrow +\infty$.

Let $C((a,b);H)$ stand for a set of continuous functions $u(t)$ of $t\in (a,b)$ with values in $H$.

$D_t^\rho y(t)$ is the Caputo fractional derivative defined as (see, \cite{Pskhu}):
\begin{equation*}
D_t^\rho y(t)=\frac{Y(t)}{\Gamma
(1-\rho)}, \quad Y(t)=\int\limits_0^t \frac{\frac{d}{d\xi}y(\xi)}{(t-\xi)^{\rho}}d\xi,
\quad t>0,
\end{equation*} 
where $\Gamma(\rho)$ is
Euler's gamma function. 

We note that the fractional derivative and the regular classical derivative of the first order are equivalent if $\rho=1$: $D_t h(t)= \frac{d}{dt} h(t)$.

\textbf{Problem}. We study the inverse problem of finding functions $\{u(t), f\}$ that satisfy the following subdiffusion equation
\begin{equation}\label{prob1}
D_t^\rho u(t) + Au(t) =g(t)f,\quad \rho\in(0,1],\quad t\in (0, T],
\end{equation}
with the initial
\begin{equation}\label{in.c}
    u(0)=\varphi,
\end{equation}
and the additional conditions
\begin{equation}\label{ad}
\int\limits_0^T u(t)dt=\psi. 
\end{equation}
Here $g(t)\in C[0,T]$ is a given function and $\varphi,\psi\in H$ are known elements.

 The solution of the inverse problem will involve examining the Cauchy problem for different types of differential equations. In this context, when we refer to the solution of the problem, we specifically mean the classical solution. This implies that all the derivatives and functions involved in the equation are assumed to be continuous with respect to the variable $t$. As an example, present the definition of the solution of the inverse problem (\ref{prob1})-(\ref{ad}).

\begin{defin}\label{def1} A pair of functions $\{u(t), f\}$ with the properties $D_t^\rho u(t), Au(t)\in C((0,T]; H)$, $u(t)\in C([0,T];H)$, $f\in H$ satisfying conditions (\ref{prob1})-(\ref{ad}) is called \textbf{the solution} of the inverse problem.
\end{defin}

Recently, inverse problems related to integer or fractional order differential equations have received more attention among researchers.

Most research on source function determination focuses on specific processes such as $F = g(t)f(x)$, where either $g (t)$ or $f(x)$ is unknown. Inverse problems of finding the function $g(t)$ have been studied, for example, in \cite{Hand1}-\cite {Ash2}). When $f(x)$ is unknown and $g(t)\equiv1$, the inverse problems have been studied by many authors (see \cite{Fur}-\cite{4}). In this work, we focus on the problem of determining the function $f(x)$, when $g(t)\not\equiv1$. Similar problems for the diffusion equation are studied in the well-known monographs of S.Kabanikhin \cite{Kab1} and the papers \cite{Pr}-\cite{FN3}. As for the subdiffusion equation, such inverse problems are studied in papers \cite{MS}-\cite{AshM2}. Let us mention some of the results obtained for the diffusion and subdiffusion equations.

We briefly note some known results on inverse problems for the diffusion equation. A.I. Prilepko and A.B. Kostin \cite{Pr} presented the elliptic part of the diffusion equation as a second-order differential expression. The authors consider both a non-self-adjoint and a self-adjoint elliptic part. They established a criterion of uniqueness of the generalized solution of the inverse problem when elliptic part is self-adjoint. Note, that here the additional condition is taken in an integral form.
Unlike to the paper \cite{Pr}, in papers \cite{Sab}, \cite{Sab2} the problem of finding the function $f(x)$ for the diffusion equation was studied using the additional condition $u(x,t_0)=\psi$. Some authors set the additional condition as $t_0=T$ (see, e.g. \cite{Orl}, \cite{Tix} for classical diffusion equations and for subdiffusion equations see \cite{MS}, \cite{MS1}). 

An inverse problem similar to (\ref{prob1})-(\ref{ad}) for various operators $A$ and with the Caputo and Riemann-Liouville derivatives are considered in \cite{FN}-\cite{FN2}, and in \cite{FN} the fractional derivative is taken in the sense of Caputo and in \cite{FN2} in the sense of Riemann-Liouville. In \cite{FN}, the criteria for the uniqueness of the solution of the inverse problem are found. And in work \cite{FN2} the question of the correctness of the inverse problem by operator methods was studied.

    In the paper \cite{AshM2} of the researchers analyzed subdiffusion equation with the Caputo derivative in which the Laplace operator forms the elliptic part. This paper focused on forward and inverse problems for the subdiffusion equation. The authors of the study proved the uniqueness and existence of the solution of the inverse problem, if the function $g(t)$ preserves its sign. Moreover, if the function $g(t)$ changes sign, a necessary and sufficient condition for the existence of a classical solution was found, and all solutions of the inverse problem were constructed using the classical Fourier method. It should be noted that all the findings presented in this paper for the case where $g(t)$ changes its sign are also new for the classical diffusion equation. Finally, we will use some original ideas from this work to solve our inverse problem.
\ \ \ \

We introduce the power of operator $ A $ with domain $$
D(A^\tau)=\{h\in H:  \sum\limits_{k=1}^\infty \lambda_k^{2\tau}
|h_k|^2 < \infty\},
$$ 
acting in $H$ according to the rule:
$$
A^\tau h= \sum\limits_{k=1}^\infty \lambda_k^\tau h_k v_k.
$$
Here $ \tau $ is an arbitrary real number and $h_k=(h,v_k)$ are the Fourier coefficients of a element $h \in H$.

For elements $h,g \in D(A^\tau)$ we introduce the scalar product:
\[
(h,g)_\tau=\sum\limits_{k=1}^\infty \lambda_k^{2\tau} h_k \overline {g_k} =
(A^\tau h,A^\tau g)
\]
and together with this norm $D(A^\tau)$ turns into a Hilbert
space.

\section{Preliminaries}

The problem of finding the function $u(t)$ satisfying subdiffusion equation (\ref{prob1}) with initial condition (\ref{in.c}) is also called \emph{the forward problem}. The forward problem is well-studied in the literature, and the existence and uniqueness of the solution have been proved in various works, including \cite{AshM2}, \cite{AshM}. These works provide important theoretical foundations for studying the inverse problem. We mention the solution of the forward problem to solve the inverse problem (\ref{prob1})-(\ref{ad}) we are studying:
\begin{equation}\label{fp}
       u(t)=\sum\limits_{k=1}^{\infty}\left[\varphi_k E_{\rho,1}(-\lambda_k t^\rho)+ f_k\int\limits_0^t (t-\eta)^{\rho-1} E_{\rho, \rho} (-\lambda_k  (t-\eta)^\rho) g(\eta)d\eta\right]v_k, 
\end{equation}
    where $\varphi_k$, $f_k$ are the Fourier coefficients of functions $\varphi$, $f$, respectively and 
   \[
E_{\rho, \mu}(z)= \sum\limits_{n=0}^\infty \frac{z^n}{\Gamma(\rho
n+\mu)} \quad 0 < \rho < 1, \quad z,\mu\in \mathbb{C}  
   \] 
is called the Mittag-Leffler function with two-parameters (see, \cite{Dzh66}, p. 133).

To find the unknowns $\{u(t), f\}$ of inverse problem (\ref{prob1})-(\ref{ad}), we apply additional condition (\ref{ad}) to equality (\ref{fp}). Then obtain the following equality:
\[
\sum\limits_{k=1}^{\infty}\left[\varphi_k \int\limits_0^T E_{\rho,1}(-\lambda_k t^\rho)dt+ f_k \int\limits_0^T\int\limits_0^t (t-\eta)^{\rho-1} E_{\rho, \rho}(-\lambda_k(t-\eta)^\rho) g(\eta) d\eta dt \right]v_k=\psi.
\]    

Now we introduce the following lemmas:

\begin{lem}\label{lem1} Let $\rho> 0$, then the following equality is hold:
 \[ \int\limits_0^TE_{\rho,1}(-\lambda_k t^\rho)dt =TE_{\rho,2}(-\lambda_k T^\rho).\]
\end{lem}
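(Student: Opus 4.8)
The plan is to expand the Mittag-Leffler function in its defining power series and integrate term by term. Writing out the definition with $\mu=1$ gives
\[
E_{\rho,1}(-\lambda_k t^\rho)=\sum_{n=0}^\infty \frac{(-\lambda_k)^n t^{\rho n}}{\Gamma(\rho n+1)},
\]
which is a series I would integrate against $dt$ over $[0,T]$ under the summation sign. The elementary integral $\int_0^T t^{\rho n}\,dt = \frac{T^{\rho n+1}}{\rho n+1}$ then appears in each term, and the decisive algebraic step is the gamma recurrence $\Gamma(\rho n+2)=(\rho n+1)\,\Gamma(\rho n+1)$, valid because $\rho n+1>0$ for $\rho>0$ and $n\geq 0$.

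Carrying this out, the integrated series becomes
\[
\sum_{n=0}^\infty \frac{(-\lambda_k)^n}{\Gamma(\rho n+1)}\cdot\frac{T^{\rho n+1}}{\rho n+1}
=\sum_{n=0}^\infty \frac{(-\lambda_k)^n\,T^{\rho n+1}}{\Gamma(\rho n+2)}
=T\sum_{n=0}^\infty \frac{(-\lambda_k T^\rho)^n}{\Gamma(\rho n+2)},
\]
and the last sum is exactly $E_{\rho,2}(-\lambda_k T^\rho)$ by the definition of the two-parameter Mittag-Leffler function with $\mu=2$. This yields the claimed identity $\int_0^T E_{\rho,1}(-\lambda_k t^\rho)\,dt = T E_{\rho,2}(-\lambda_k T^\rho)$ directly.

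The one point requiring genuine justification — and the step I would treat as the main obstacle — is the interchange of summation and integration. I would resolve it by noting that for fixed $\lambda_k>0$ and $\rho>0$ the Mittag-Leffler series is an entire function of its argument, so it converges absolutely and uniformly for $t$ in the compact interval $[0,T]$; the partial sums are dominated by a convergent series of constants on $[0,T]$, and uniform convergence on a compact interval permits term-by-term integration. Everything after that is the routine power-series manipulation sketched above, so once the interchange is justified the proof is complete.
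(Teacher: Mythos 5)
Your proof is correct, but it takes a different route from the paper. The paper disposes of this lemma in one line by citing the tabulated identity $\int_0^t \eta^{\beta-1}E_{\rho,\beta}(\lambda\eta^\rho)\,d\eta=t^{\beta}E_{\rho,\beta+1}(\lambda t^\rho)$ (Gorenflo et al., formula (4.4.4)) and specializing to $\beta=1$; you instead prove the identity from scratch by expanding the Mittag-Leffler series, integrating term by term, and using the recurrence $\Gamma(\rho n+2)=(\rho n+1)\Gamma(\rho n+1)$. Your justification of the interchange is sound: on $[0,T]$ the terms are dominated by $\lambda_k^n T^{\rho n}/\Gamma(\rho n+1)$, whose sum is the (finite) value $E_{\rho,1}(\lambda_k T^\rho)$ of an entire function, so the Weierstrass test gives uniform convergence and term-by-term integration is legitimate. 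What the paper's citation buys is brevity and the general-$\beta$ form of the identity, which it reuses immediately in Lemma 2.2 with $\beta=\rho$; what your argument buys is self-containedness, and it is worth noting that the same term-by-term computation, run with $t^{\beta-1}$ in place of the $\beta=1$ case, proves the cited formula in full generality with no extra effort.
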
 
\begin{proof} The proof of this lemma follows from the following equality (see, \cite{Gor}, formula (4.4.4), p. 61):
    \begin{equation}\label{MLI}
			\int\limits_0^t \eta^{\beta-1}E_{\rho,\beta}(\lambda\eta^\rho)d\eta=t^{\beta} E_{\rho,\beta+1}(\lambda t^\rho), \quad \rho>0, \quad\beta > 0, \quad \lambda \in C,
		\end{equation}
\end{proof}
\begin{lem}\label{lem2} Let $\rho> 0$, then 
\begin{equation*} \int\limits_0^T\int\limits_0^{t} (t-\eta)^{\rho-1} E_{\rho, \rho} (-\lambda_k  (t-\eta)^\rho ) g(\eta) d\eta dt =  \int\limits_0^T g(\eta) (T-\eta)^{\rho} E_{\rho, \rho+1} (-\lambda_k  (T-\eta)^\rho )  d\eta.
\end{equation*}
 \begin{proof}
  By calculating the double integral, we obtain the following equality:
\begin{equation}\label{in2}
 \int\limits_0^T\int\limits_0^{t} (t-\eta)^{\rho-1} E_{\rho, \rho} (-\lambda_k  (t-\eta)^\rho ) g(\eta) d\eta dt
\end{equation}
\begin{equation*}
 =\int\limits_0^T g(\eta)d\eta \int\limits_\eta^{T} (t-\eta)^{\rho-1} E_{\rho, \rho} (-\lambda_k  (t-\eta)^\rho ) dt=\int\limits_0^T g(\eta)d\eta \int\limits_0^{T-\eta} s^{\rho-1} E_{\rho, \rho} (-\lambda_k  s^\rho ) ds  
\end{equation*}
\begin{equation*}
 =\int\limits_0^T g(\eta)d\eta \int\limits_0^{T-\eta} s^{\rho-1} E_{\rho, \rho} (-\lambda_k  s^\rho ) ds. 
\end{equation*}
Due to equality (\ref{lem1}), (\ref{in2}) is equal to the following integral:
\begin{equation*}
 \int\limits_0^T g(\eta) (T-\eta)^{\rho} E_{\rho, \rho+1} (-\lambda_k  (T-\eta)^\rho )  d\eta.
\end{equation*}  
 \end{proof}
\end{lem}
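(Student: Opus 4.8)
The plan is to evaluate the double integral by interchanging the order of integration, performing an elementary change of variable, and then invoking the Mittag-Leffler integral identity (\ref{MLI}) quoted in the proof of Lemma \ref{lem1}. The integration runs over the triangular region $\{(t,\eta): 0\le \eta\le t\le T\}$, so by Fubini's theorem I would rewrite the iterated integral with $t$ as the inner variable, pulling out $g(\eta)$ since it does not depend on $t$, to obtain
\[
\int\limits_0^T g(\eta)\,d\eta \int\limits_\eta^{T} (t-\eta)^{\rho-1} E_{\rho,\rho}\bigl(-\lambda_k (t-\eta)^\rho\bigr)\,dt.
\]
This is precisely the first equality displayed in the lemma's proof sketch.

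Next I would substitute $s=t-\eta$ in the inner integral, so that the limits $t=\eta$ and $t=T$ become $s=0$ and $s=T-\eta$, turning the inner integral into $\int\limits_0^{T-\eta} s^{\rho-1} E_{\rho,\rho}(-\lambda_k s^\rho)\,ds$. This is exactly the left-hand side of (\ref{MLI}) with $\beta=\rho$, $\lambda=-\lambda_k$, and upper limit $T-\eta$; applying that formula evaluates it to $(T-\eta)^\rho E_{\rho,\rho+1}(-\lambda_k (T-\eta)^\rho)$. Reinserting this into the outer $\eta$-integral yields the claimed right-hand side, so the bulk of the argument is a routine bookkeeping of limits together with the known fractional-integration identity.

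The one point that genuinely requires care is the legitimacy of the Fubini interchange, because for $0<\rho<1$ the kernel carries the singular factor $(t-\eta)^{\rho-1}$ along the diagonal $t=\eta$. To justify it I would observe that $g\in C[0,T]$ is bounded and that $E_{\rho,\rho}(-\lambda_k s^\rho)$ is bounded for $s\ge 0$, so the absolute integrability of the integrand over the triangle reduces to checking that $\int\limits_0^T\int\limits_0^t (t-\eta)^{\rho-1}\,d\eta\,dt = \int\limits_0^T \tfrac{t^\rho}{\rho}\,dt<\infty$. Since this bound is finite, Fubini's theorem applies and the interchange of integration order above is valid, which completes the proof.
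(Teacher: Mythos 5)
Your proof is correct and follows essentially the same route as the paper: interchange the order of integration, substitute $s=t-\eta$, and apply the integral identity (\ref{MLI}) with $\beta=\rho$. The only difference is that you explicitly justify the Fubini interchange near the singular diagonal, which the paper omits but which is a welcome (and easily verified) addition.
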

According to Lemma \ref{lem1} and Lemma \ref{lem2}, we have the following equality:
\begin{equation*}
 \sum\limits_{k=1}^{\infty}\left[\varphi_k TE_{\rho,2}(-\lambda_k T^\rho )dt+ f_k \int\limits_0^T (t-\eta)^{\rho} E_{\rho, \rho+1} (-\lambda_k  (t-\eta)^\rho ) g(\eta) d\eta dt \right ]v_k=\psi.     
\end{equation*}
If we expand the function $\psi$ into the Fourier series according to the system $\{ v_k\}$ and equate the Fourier coefficients, then we have the following equality:

\begin{equation}\label{EqFor_fk1}
f_k p_{k,\rho}(T)=
   \psi_k - \varphi_k TE_{\rho,2}(-\lambda_k T^\rho).
\end{equation}
 where
\begin{equation*} 
p_{k,\rho}(T)=\int\limits_0^T g(\eta) (T-\eta)^{\rho} E_{\rho, \rho+1} (-\lambda_k  (T-\eta)^\rho )  d\eta.
\end{equation*} 

According to the idea of the authors of \cite{AshM2}, we divide $\mathbb{N}$ into two sets, i.e. $N=B_\rho \cup B_{0,\rho} $. Here, $\mathbb{N}$ represents the set of all natural numbers. The sets $B_\rho$ and $B_{0,\rho}$ are defined as follows:

1) If the function $p_{k,\rho}(T)\neq 0$, then $k\in B_\rho$,

2) Alternatively, if the function $p_{k,\rho}(T)=0$, then $k\in B_{0,\rho}$.

It is obvious, if $g(t)$ is a sign-preserving function, then $p_{k,\rho}(T)\neq 0$. Therefore, in this case the set $B_{0,\rho}$ is empty and $B_\rho=\mathbb{N}$. 

Equation (\ref{EqFor_fk1}) provides us with a means to determine $f_k$. It can be observed that the criterion for the uniqueness of the solution to the inverse problem associated with the diffusion and subdiffusion equations can be expressed as follows:
\[
p_{k,\rho}(T)\neq 0.
\]
According to this criterion, for the solution to be unique, it is necessary that the expression $p_{k,\rho}(T)$ does not equal zero.

To establish two-sided estimates for $p_{k,\rho}(T)$, let's consider the case where the function $g(t)$ does not change sign. In this case, the set $B_{0,\rho}$ is empty. Then the following lemma holds.
 
\begin{lem}\label{invvv1}Let $\rho\in (0,1]$, $g(t)\in C[0,T]$ and $g(t)\neq 0$, $t\in [0,T]$. Then there are constants $C_0,C_1>0$, depending on $T$, such that for all $k$:
		\begin{equation*}
		\frac{C_0}{\lambda_k}\leq |p_{k,\rho}(T)|\leq\frac{C_1}{\lambda_k}.    
		\end{equation*}
	\end{lem}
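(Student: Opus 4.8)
The plan is to strip the coefficient $g$ out of the integral using its sign-definiteness, then reduce the remaining integral to a single value of a Mittag-Leffler function via the identity (\ref{MLI}), and finally estimate that value. Since $g\in C[0,T]$ with $g(t)\neq 0$ on the compact interval $[0,T]$, the function $g$ keeps a constant sign and the quantities $m:=\min_{t\in[0,T]}|g(t)|$ and $M:=\max_{t\in[0,T]}|g(t)|$ satisfy $0<m\le M<\infty$. The crucial analytic input is that for $0<\rho\le 1$ and $\mu\ge\rho$ the function $E_{\rho,\mu}(-x)$ is strictly positive for $x\ge 0$ (it is completely monotone, and equals $1/\Gamma(\mu)>0$ at $x=0$). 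Applying this with $\mu=\rho+1\ge\rho$, the integrand $(T-\eta)^\rho E_{\rho,\rho+1}(-\lambda_k(T-\eta)^\rho)$ is nonnegative, so that the sign of $g$ factors out and
\[
|p_{k,\rho}(T)|=\int\limits_0^T |g(\eta)|\,(T-\eta)^\rho E_{\rho,\rho+1}(-\lambda_k(T-\eta)^\rho)\,d\eta .
\]
Bounding $|g|$ between $m$ and $M$ then gives $m\,I_k\le|p_{k,\rho}(T)|\le M\,I_k$, where $I_k:=\int_0^T (T-\eta)^\rho E_{\rho,\rho+1}(-\lambda_k(T-\eta)^\rho)\,d\eta$.

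Next I would evaluate $I_k$ in closed form. The substitution $s=T-\eta$ transforms it into $\int_0^T s^\rho E_{\rho,\rho+1}(-\lambda_k s^\rho)\,ds$, which is exactly of the form (\ref{MLI}) with $\beta=\rho+1$ and $\lambda=-\lambda_k$. Hence
\[
I_k = T^{\rho+1}\,E_{\rho,\rho+2}(-\lambda_k T^\rho),
\]
and the whole lemma is reduced to a two-sided estimate for $E_{\rho,\rho+2}(-x)$ evaluated at $x=\lambda_k T^\rho$. To obtain such an estimate I use two facts: first, $E_{\rho,\rho+2}(-x)$ is continuous and strictly positive on $[0,\infty)$ (complete monotonicity, since $\rho+2\ge\rho$); second, the asymptotics of the Mittag-Leffler function give $x\,E_{\rho,\rho+2}(-x)\to 1/\Gamma(\rho+2-\rho)=1/\Gamma(2)=1$ as $x\to\infty$. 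Consequently $(1+x)\,E_{\rho,\rho+2}(-x)$ is continuous, positive and admits a positive finite limit at infinity, so it is squeezed between two positive constants $c_0,c_1$; that is, $\tfrac{c_0}{1+x}\le E_{\rho,\rho+2}(-x)\le \tfrac{c_1}{1+x}$ for all $x\ge 0$.

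It remains to pass from the variable $x=\lambda_k T^\rho$ back to $\lambda_k$ uniformly in $k$. Because $\lambda_k\ge\lambda_1>0$, one has $T^\rho\le (1+\lambda_k T^\rho)/\lambda_k\le \lambda_1^{-1}+T^\rho$, so $1+\lambda_k T^\rho$ is comparable to $\lambda_k$ with constants depending only on $T,\rho$ and $\lambda_1$. Combining this with the bound for $E_{\rho,\rho+2}$ yields $I_k=T^{\rho+1}E_{\rho,\rho+2}(-\lambda_k T^\rho)\asymp 1/\lambda_k$, and then $m\,I_k\le|p_{k,\rho}(T)|\le M\,I_k$ gives the desired inequality with $C_0,C_1>0$ depending on $T$ (and $\rho$, $g$). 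The main obstacle is precisely the uniform \emph{lower} bound on $E_{\rho,\rho+2}(-x)$: the upper estimate $O\big((1+x)^{-1}\big)$ is routine, but the lower estimate genuinely relies on positivity of the Mittag-Leffler function together with the leading asymptotic constant $1/\Gamma(\mu-\rho)$ being nonzero. This is why the argument works only when $g$ does not change sign — when $g$ changes sign the integrand is no longer of one sign, cancellation can drive $p_{k,\rho}(T)$ to zero, and exactly these indices form the set $B_{0,\rho}$.
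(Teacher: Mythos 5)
Your proposal is correct and follows essentially the same route as the paper: both reduce $|p_{k,\rho}(T)|$ via the identity (\ref{MLI}) to $T^{\rho+1}E_{\rho,\rho+2}(-\lambda_k T^\rho)$ times a factor controlled by $\min|g|$ and $\max|g|$, then use the asymptotics (\ref{MLA}) for the lower bound and the estimate (\ref{Ml}) for the upper bound. The only differences are cosmetic: you treat all $\rho\in(0,1]$ uniformly where the paper separates $\rho=1$, and you make explicit the positivity of $E_{\rho,\rho+1}(-x)$ that the paper's appeal to the mean value theorem for integrals implicitly requires.
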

\begin{proof}
Let $\rho=1$. By integrating by parts and  the mean value theorem, we obtain
    \[
    p_{k,1}(T)=\frac{1}{\lambda_k}\int\limits_0^{T} (1-e^{-\lambda_k  s}) g(T-s)ds  =
    \]
    \[
    =\frac{g(\xi_k)}{\lambda_k} \bigg[{T}-\frac{1}{\lambda_k} (1-  e^{-\lambda_k T})\bigg], \quad \xi_k\in [0, T].    
\]

By virtue of the Weierstrass theorem, we have $|g(t)|\geq g_0=const >0$. Then we can establish the lower and upper bounds as follows:

\[ \frac{g_0c_0}{\lambda_k}\leq|p_{k,1}(T)|\leq\frac{\max\limits_{0\leq\xi \leq T}|g(\xi)|T}{\lambda_k}.\]
Let $\rho \in (0,1)$. Apply the mean value theorem and equality (\ref{MLI}) to obtain
  \[
  |p_{k,\rho}(T)| =\bigg|\int\limits _0^{T} \eta^{\rho} E_{\rho, \rho+1} (-\lambda_k  \eta^\rho)g(T-\eta)d\eta\bigg|=
  \]
  \[=|g(\xi_k)| T^{\rho+1} E_{\rho, \rho+2} (-\lambda_k T^\rho ) , \quad \xi_k\in[0,T].\]  
  
  Therefore, using the asymptotic estimate of the Mittag-Leffler function (see, \cite{Dzh66}, p. 134)
  \begin{equation}\label{MLA}
   E_{\rho, \mu}(-t)=\frac{t^{-1}}{\Gamma(\mu-\rho)}+O(t^{-2})    
  \end{equation}
  and the estimate $|g(t)|\geq g_0$ one has 
  \[
  |p_{k,\rho}(T)|={|g(\xi_k)|{T^{\rho+1}}}\bigg(({T^\rho\lambda_k})^{-1}+O({\lambda_kT^{\rho}})^{-2}\bigg)\geq  \frac{C_0}{\lambda_k}.
  \]  
  Finally, according to the estimate of the Mittag-Leffler function (see, \cite{Dzh66}, p. 136) 
\begin{equation}\label{Ml}
|E_{\rho, \mu}(-t)|\leq \frac{C}{1+t}, \quad t\geq0
\end{equation}
(where constant $C$ does not depend on $t$ and $\mu$), we have
  \[
  |p_{k,\rho}(T)|\leq C\frac{|g(\xi_k)|T^{\rho+1}}{1+\lambda_k T^\rho} \leq C\frac{ \max\limits_{0\leq\xi \leq T}|g(\xi)|T}{\lambda_k}\leq \frac{C_1}{\lambda_k}.
  \]
  
\end{proof}

Now consider the case when $g(t)$ changes sign. Then the function $p_{k,\rho}(T)$ can become zero, and as a result, the set $B_{0,\rho}$ may turn out to be non-empty. In the case where the sign of $g(t)$ is a variable function, we will present the following lemma.
\begin{lem}\label{lemmaSub} Let $\rho\in (0,1]$, $g(t)\in C^1[0, T]$ and $g(0)\neq 0$. 
Then there exist numbers $m_0>0$ and $k_0$ such that, for all  $T\leq m_0$ and $k\geq k_0$, the following estimates hold:
		\begin{equation}\label{estimateSub}
		\frac{C_0}{\lambda_k}\leq |p_{k,\rho}(T)|\leq\frac{C_1}{\lambda_k}.
		\end{equation}
  where constants $C_0$ and $C_1>0$ depend on $m_0$ and $k_0$.
	\end{lem}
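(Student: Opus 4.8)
The plan is to isolate the leading behaviour of $p_{k,\rho}(T)$ as $k\to\infty$ and to show that it is governed by the value $g(0)\neq 0$, so that sign changes of $g$ away from the origin cannot destroy the two-sided bound once $T$ is small and $k$ is large. First I would substitute $s=T-\eta$ to rewrite
\[
p_{k,\rho}(T)=\int_0^T g(T-s)\, s^\rho E_{\rho,\rho+1}(-\lambda_k s^\rho)\, ds,
\]
and split $g(T-s)=g(0)+[g(T-s)-g(0)]$. The first piece integrates exactly by (\ref{MLI}) with $\beta=\rho+1$, producing the main term $g(0)\,T^{\rho+1}E_{\rho,\rho+2}(-\lambda_k T^\rho)$; the second piece is a remainder $R_k$ controlled by the $C^1$ hypothesis.

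The upper bound is the easy half and needs no splitting: bounding $|g|\le\max_{[0,T]}|g|$ and applying (\ref{Ml}) to $E_{\rho,\rho+1}$ gives $\int_0^T s^\rho|E_{\rho,\rho+1}(-\lambda_k s^\rho)|\,ds\le\int_0^T \frac{C s^\rho}{1+\lambda_k s^\rho}\,ds\le CT/\lambda_k$, whence $|p_{k,\rho}(T)|\le C_1/\lambda_k$ for all $k$.

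The lower bound is the heart of the matter. From $g\in C^1[0,T]$ one has $|g(T-s)-g(0)|\le\|g'\|_\infty T$ uniformly on $[0,T]$, so the same estimate (\ref{Ml}) yields $|R_k|\le C\|g'\|_\infty T^2/\lambda_k$. For the main term I would invoke the asymptotic (\ref{MLA}) with $\mu=\rho+2$ (so $\Gamma(\mu-\rho)=\Gamma(2)=1$), giving $T^{\rho+1}E_{\rho,\rho+2}(-\lambda_k T^\rho)=T/\lambda_k+O(T^{1-\rho}/\lambda_k^2)$. Collecting the two error contributions and using the reverse triangle inequality,
\[
|p_{k,\rho}(T)|\ge \frac{T}{\lambda_k}\left[\,|g(0)|-\frac{C}{\lambda_k T^\rho}-C\|g'\|_\infty T\,\right].
\]
Making the bracket bounded below by a positive constant is exactly what forces the two restrictions in the statement: choosing $m_0$ so small that $C\|g'\|_\infty m_0\le|g(0)|/3$ kills the $C^1$ error for every $T\le m_0$, and choosing $k_0$ so that $\lambda_{k_0}T^\rho\ge 3C/|g(0)|$ kills the asymptotic error for every $k\ge k_0$; then the bracket is $\ge|g(0)|/3$ and $|p_{k,\rho}(T)|\ge C_0/\lambda_k$.

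I expect the delicate point to be the bookkeeping of the two competing smallness requirements, $T$ small (to control the deviation of $g$ from $g(0)$) against $\lambda_k T^\rho$ large (to validate the Mittag-Leffler asymptotic); since these pull $T$ in opposite directions, the constants $C_0,C_1$ unavoidably depend on the chosen $m_0,k_0$, precisely as the statement records, and one should treat $T$ as fixed with $T\le m_0$ while letting $k_0$ depend on $T$. A variant fully parallel to Lemma \ref{invvv1} is to apply the mean value theorem to the non-negative weight $s^\rho E_{\rho,\rho+1}(-\lambda_k s^\rho)$, writing $p_{k,\rho}(T)=g(\xi_k)\,T^{\rho+1}E_{\rho,\rho+2}(-\lambda_k T^\rho)$ with $\xi_k\in[0,T]$ after first shrinking $m_0$ so that $g$ stays bounded away from zero on $[0,m_0]$ (possible since $g(0)\neq0$); I would nonetheless keep the decomposition version as primary, since it uses the $C^1$ hypothesis transparently and does not presume $g$ is sign-definite on the whole interval.
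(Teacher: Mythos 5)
Your proof is correct and follows essentially the same route as the paper: both isolate the main term $g(0)\,T^{\rho+1}E_{\rho,\rho+2}(-\lambda_k T^\rho)$, bound the remainder by $C\|g'\|_\infty T^2/\lambda_k$ using the $C^1$ hypothesis (the paper via integration by parts and the mean value theorem applied to the $g'$ integral, you via the additive splitting $g(T-s)=g(0)+[g(T-s)-g(0)]$), and then invoke the asymptotic (\ref{MLA}) so that the $g(0)T/\lambda_k$ term dominates for $T\leq m_0$ and $k\geq k_0$. Your explicit bookkeeping of the two competing smallness requirements ($T$ small versus $\lambda_k T^\rho$ large) is, if anything, slightly more careful than the paper's, which treats $\rho=1$ separately but otherwise argues in the same way.
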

 \begin{proof}
Let $\rho=1$. By integrating by parts and  the mean value theorem, we get
    \[
    p_{k,1}(T)=\frac{1}{\lambda_k}\int\limits_0^{T} (1-e^{-\lambda_k  s}) g(T-s)ds  =\frac{1}{\lambda_k}\bigg[g(T-s)(s+\frac{e^{-\lambda_k  s}}{\lambda_k})\bigg|^{T}_0\]
    \[+\int\limits_0^{T} (s+\frac{e^{-\lambda_k  s}}{\lambda_k}) g'(T-s)ds\bigg]
    \]
    \[
    =\frac{g(0)}{\lambda_k} \bigg(T+\frac{e^{-\lambda_k T}}{\lambda_k}\bigg)-\frac{g(T)}{\lambda_k^2} +  \frac{g'(\xi_k)}{\lambda_k}\big[\frac{T^2}{2}-\frac{1}{\lambda_k^2}(1-e^{-\lambda_k T}
)\big], \quad \xi_k\in [0, T].    
\]
since $k\geq k_0$
\[
    |p_{k,1}(T)|\geq\bigg|\frac{g(0)}{\lambda_k}T-\frac{g(T)}{\lambda_k^2}\bigg|.    
\]
If $g(0)\neq 0$, then for large $k$ we can conclude that there exists a constant $C_0$ such that the lower bound in the estimate holds.

To establish the upper estimate, we utilize the boundedness of the function $g(t)$.
 
  Let $\rho\in (0,1)$. Using equality (\ref{MLI}) we integrate by parts, then apply the mean value theorem. Then we have
    \[  
    p_{k,\rho}(T)=\int\limits_0^{T}g(T-s) s^{\rho} E_{\rho, \rho+1} (-\lambda_k  s^\rho )  ds=\int\limits_0^{T}g(T-s) d\big[ s^{\rho+1} E_{\rho, \rho+2} (-\lambda_k  s^\rho ) \big] =
    \]
    \[
    =g(T-s)  s^{\rho+1} E_{\rho, \rho+2} (-\lambda_k  s^\rho )\bigg|^{T}_0+\int\limits_0^{T}g'(T-s)  s^{\rho+1} E_{\rho, \rho+2} (-\lambda_k  s^\rho )ds=
    \]
    \[
    =g(0)\,  T^{\rho+1} \,E_{\rho, \rho+2} (-\lambda_k  T^\rho)+ g'(\xi_k) \int\limits_0^{T} s^{\rho+1} E_{\rho, \rho+2} (-\lambda_k  s^\rho )ds, \quad \xi_k\in [0, T].
    \]
    For the last integral formula (\ref{MLI}) implies
\[
    \int\limits_0^{T} s^{\rho+1} E_{\rho, \rho+2} (-\lambda_k  s^\rho )ds=T^{\rho+2} E_{\rho, \rho+3}(-\lambda_k T^\rho).
\]
    Apply the asymptotic estimate (\ref{MLA}) to get
\[
p_{k,\rho}(T)=\frac{g(0)T}{\lambda_k} +\frac{g'(\xi_k)}{\lambda_k} T^2 + O\bigg(\frac{1}{(\lambda_k T^\rho)^2}\bigg).
\]
If $g(0)\neq 0$, we can infer that for sufficiently small $T$ and sufficiently large $k$, the required lower estimate holds. Additionally, this implies the required upper bound as well.
\end{proof}  
\begin{cor}\label{Krho}If conditions of Lemma \ref{lemmaSub} are  satisfied, then estimate (\ref{estimateSub}) holds for suffuciently small $T$ and $k\in B_{\rho}$.
 \end{cor}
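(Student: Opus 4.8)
The plan is to deduce the corollary from Lemma~\ref{lemmaSub} by isolating the finitely many small indices. First I would fix a value $T\le m_0$; this freezes the splitting $\mathbb{N}=B_\rho\cup B_{0,\rho}$, and Lemma~\ref{lemmaSub} already supplies the two-sided bound (\ref{estimateSub}) for every index with $k\ge k_0$. Hence the only indices left to treat are those $k\in B_\rho$ with $k<k_0$, of which there are at most $k_0-1$.

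For the upper bound I would observe that it in fact holds for \emph{all} $k$ with no restriction, exactly as in the proof of Lemma~\ref{invvv1}: estimating under the integral sign with the Mittag-Leffler bound (\ref{Ml}) and substituting $s=T-\eta$ gives
\[
|p_{k,\rho}(T)|\le C\max_{0\le\xi\le T}|g(\xi)|\int_0^T \frac{s^\rho}{1+\lambda_k s^\rho}\,ds\le \frac{C\max_{0\le\xi\le T}|g(\xi)|\,T}{\lambda_k}\le\frac{C_1}{\lambda_k},
\]
since $s^\rho/(1+\lambda_k s^\rho)\le \lambda_k^{-1}$. This step uses only the continuity and boundedness of $g$, so it is insensitive to any change of sign and already covers the small indices.

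The lower bound is the only place where the finite exceptional set plays a role. By the very definition of $B_\rho$, each $k\in B_\rho$ with $k<k_0$ satisfies $p_{k,\rho}(T)\neq 0$, hence $\lambda_k|p_{k,\rho}(T)|>0$. As this is a finite collection of strictly positive numbers, I would set
\[
C_0=\min\Big\{\,C_0',\ \min_{\substack{k\in B_\rho\\ k<k_0}}\lambda_k|p_{k,\rho}(T)|\,\Big\}>0,
\]
where $C_0'$ denotes the lower constant furnished by Lemma~\ref{lemmaSub} for $k\ge k_0$. With this choice $|p_{k,\rho}(T)|\ge C_0/\lambda_k$ holds simultaneously for the large indices and for the finitely many small ones in $B_\rho$, which together with the uniform upper bound yields (\ref{estimateSub}) for all $k\in B_\rho$.

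I do not anticipate a genuine obstacle; the argument is essentially bookkeeping. The one point that requires care is that $B_\rho$ depends on $T$, so the exceptional set and therefore the constant $C_0$ must be chosen only \emph{after} fixing the (sufficiently small) value of $T$. One cannot let $T$ vary while keeping $C_0$ uniform, because some $p_{k,\rho}(T)$ with $k<k_0$ could approach zero as $T$ changes and an index could migrate between $B_\rho$ and $B_{0,\rho}$; fixing $T$ first removes this difficulty.
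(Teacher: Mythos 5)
Your argument is correct and is precisely the reasoning the paper intends: the paper states this corollary without proof, treating it as immediate from Lemma \ref{lemmaSub} once one notes that the remaining indices $k<k_0$ in $B_\rho$ form a finite set of indices with $p_{k,\rho}(T)\neq 0$, so the lower constant can be taken as a minimum over finitely many positive numbers, while the upper bound via (\ref{Ml}) holds for all $k$ regardless of sign changes. Your closing remark that $T$ must be fixed before choosing $C_0$ (since $B_\rho$ and the small-index values depend on $T$) is a careful point the paper leaves implicit.
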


\begin{cor}\label{Krho1}If conditions of Lemma \ref{lemmaSub} are  satisfied and $T$ is sufficiently small, then  set $B_{0,\rho}$ has a finite number elements.
\end{cor}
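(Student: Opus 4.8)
The plan is to read the conclusion off directly from the two-sided estimate established in Lemma \ref{lemmaSub}, since that lemma already contains all of the analytic content. Recall that by the definition preceding the lemma, $k \in B_{0,\rho}$ precisely when $p_{k,\rho}(T) = 0$. Hence, to show that $B_{0,\rho}$ is finite it suffices to prove that $p_{k,\rho}(T) \neq 0$ for all but finitely many $k$.

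First I would invoke Lemma \ref{lemmaSub}: under its hypotheses ($\rho \in (0,1]$, $g \in C^1[0,T]$, $g(0) \neq 0$) there exist a threshold $m_0 > 0$ and an index $k_0$ such that, whenever $T \leq m_0$, the lower bound $|p_{k,\rho}(T)| \geq C_0/\lambda_k$ holds for every $k \geq k_0$. Since $C_0 > 0$ and $\lambda_k > 0$, the right-hand side is strictly positive, so $|p_{k,\rho}(T)| > 0$, i.e. $p_{k,\rho}(T) \neq 0$, for all $k \geq k_0$. The phrase ``$T$ sufficiently small'' in the statement is exactly the requirement $T \leq m_0$ supplied by the lemma.

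Consequently, no index $k \geq k_0$ can lie in $B_{0,\rho}$; that is, $B_{0,\rho} \subseteq \{1, 2, \ldots, k_0 - 1\}$. For the finitely many remaining indices $k < k_0$ we make no claim (the lemma gives no estimate there), but this is harmless: a subset of the finite set $\{1, \ldots, k_0 - 1\}$ is automatically finite, so $B_{0,\rho}$ has at most $k_0 - 1$ elements, which is the assertion of the corollary.

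There is essentially no analytic obstacle here, because the difficult asymptotic analysis of $p_{k,\rho}(T)$ has already been carried out in Lemma \ref{lemmaSub}. The only point requiring care is the bookkeeping: one must ensure that the $T$-threshold $m_0$ and the index $k_0$ are chosen \emph{simultaneously}, as the lemma does, so that the strictly positive lower estimate is uniform over the entire tail $k \geq k_0$; finiteness of $B_{0,\rho}$ then follows from the containment in the initial segment $\{1, \ldots, k_0 - 1\}$.
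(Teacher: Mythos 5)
Your proof is correct and follows exactly the route the paper intends: the corollary is an immediate consequence of the lower bound $|p_{k,\rho}(T)|\geq C_0/\lambda_k>0$ from Lemma \ref{lemmaSub}, which forces $B_{0,\rho}\subseteq\{1,\dots,k_0-1\}$. The paper leaves this proof implicit, and your write-up supplies precisely that argument.
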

\begin{remark}
    In the paper \cite{AshM2}, a lemma similar to the above lemma was proved for the diffusion and subdiffusion equations. In this paper $g(t_0)\neq 0$ and $g(0)\neq 0$ were for $\rho=1$ and $\rho\in (0,1)$, respectively. In this paper, in cases where $ \rho=1 $ and $ \rho\in(0.1)$, conditions $ g (t_0) \neq 0 $ and $ g(0) \neq 0$ for function $g(t)$ were found, respectively. However, in our lemma, for the diffusion and subdiffusion equations, for function $g(t)$ one has the same condition, i.e. $g(0)\neq 0$.
\end{remark}
\section{The solution of problem (\ref{prob1})-(\ref{ad})}
If $g(t)$ is a sign-preserving function, then the following theorem holds. 
\begin{thm}\label{thmNotChange} 
Let $\rho\in (0,1]$, $\varphi \in H$, $\psi \in D(A)$, $g(t)\in C[0,T]$ and $g(t)\neq 0$, $t\in [0,T]$. Then there exists a unique solution of the inverse problem (\ref{prob1})-(\ref{ad}):
		\begin{equation*}
		 f=\sum\limits_{k=1}^\infty \frac{1}{p_{k,\rho}(T)}\left[ \psi_k-\varphi_k TE_{\rho,2}(-\lambda_k  T^\rho)\right]v_k,   
		\end{equation*}
	\begin{equation*}
	u(t)=\sum\limits_{k=1}^\infty \left[\varphi_k E_{\rho,1}(-\lambda_kt^\rho)+ \frac{p_{k,\rho}(t)}{p_{k,\rho}(T)}\left[ \psi_k-\varphi_k TE_{\rho,2}(-\lambda_k  T^\rho)\right]\right]v_k.    
	\end{equation*}

\end{thm}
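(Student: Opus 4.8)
The plan is to reduce the inverse problem to the scalar equation (\ref{EqFor_fk1}) for the Fourier coefficients $f_k$ and then to exploit the two-sided bound of Lemma \ref{invvv1}. Since $g(t)$ is sign-preserving, the set $B_{0,\rho}$ is empty, so $p_{k,\rho}(T)\neq 0$ for every $k$; hence (\ref{EqFor_fk1}) is uniquely solvable and forces
\begin{equation*}
f_k=\frac{1}{p_{k,\rho}(T)}\big[\psi_k-\varphi_k TE_{\rho,2}(-\lambda_k T^\rho)\big].
\end{equation*}
This already yields uniqueness: any solution must have these Fourier coefficients, and $u(t)$ is then determined through the forward representation (\ref{fp}). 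The remaining work is to prove that this candidate $f$ actually lies in $H$ and that the associated $u(t)$ is a classical solution in the sense of Definition \ref{def1}.

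First I would verify $f\in H$, i.e. $\sum_k|f_k|^2<\infty$. By the lower bound in Lemma \ref{invvv1} one has $|p_{k,\rho}(T)|^{-1}\leq \lambda_k/C_0$, while estimate (\ref{Ml}) gives $|E_{\rho,2}(-\lambda_k T^\rho)|\leq C/(1+\lambda_k T^\rho)\leq C'/\lambda_k$. Combining these,
\begin{equation*}
|f_k|\leq \frac{\lambda_k}{C_0}\Big(|\psi_k|+T|\varphi_k|\,|E_{\rho,2}(-\lambda_k T^\rho)|\Big)\leq C\big(\lambda_k|\psi_k|+|\varphi_k|\big).
\end{equation*}
Squaring and summing, the first term is controlled by $\sum_k\lambda_k^2|\psi_k|^2<\infty$, which is precisely the hypothesis $\psi\in D(A)$, and the second by $\sum_k|\varphi_k|^2<\infty$, which is $\varphi\in H$. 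Thus $f\in H$; this is exactly where the two assumptions enter, the factor $\lambda_k$ produced by $1/p_{k,\rho}(T)$ being absorbed by $\psi\in D(A)$.

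With $f\in H$ fixed, the function $u(t)$ is the solution of the forward problem (\ref{prob1})--(\ref{in.c}), whose existence, uniqueness, and regularity are known (\cite{AshM2}, \cite{AshM}); substituting the coefficients $f_k$ into (\ref{fp}) produces the stated series. To confirm it satisfies Definition \ref{def1}, I would check termwise that $Au(t)\in C((0,T];H)$ and $D_t^\rho u(t)\in C((0,T];H)$: using $\lambda_k|E_{\rho,1}(-\lambda_k t^\rho)|\leq Ct^{-\rho}$ from (\ref{Ml}) together with the upper bound $|p_{k,\rho}(t)|\leq C_1/\lambda_k$ (valid uniformly in $t\in[0,T]$ by the argument of Lemma \ref{invvv1}) and the bound on $|f_k|$, the series for $Au(t)$ is majorized on each $[\delta,T]$, up to constants, by $\sum_k(\lambda_k^2|\psi_k|^2+|\varphi_k|^2)<\infty$, giving uniform convergence and hence continuity away from $t=0$; the equation then yields $D_t^\rho u=g(t)f-Au\in C((0,T];H)$, since $g(t)f\in C([0,T];H)$. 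Continuity of $u$ on all of $[0,T]$ together with $u(0)=\varphi$ follows from $|f_k|\,|p_{k,\rho}(t)|\leq C|f_k|/\lambda_k\leq C(|\psi_k|+|\varphi_k|)$ and $E_{\rho,1}(0)=1$. Finally, the additional condition (\ref{ad}) holds by construction: integrating (\ref{fp}) over $[0,T]$ and applying Lemmas \ref{lem1} and \ref{lem2} reproduces exactly equation (\ref{EqFor_fk1}), from which $f_k$ was defined.

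I expect the main obstacle to be the convergence and regularity verification for $Au(t)$ and $D_t^\rho u(t)$ near $t=0$, where the Mittag-Leffler factors cease to decay in $k$ and only continuity on $(0,T]$ can be asked of these quantities. The quantitative heart of the argument is the balance between the growth $1/p_{k,\rho}(T)\sim\lambda_k$ and the decay supplied by $\psi\in D(A)$, which is what simultaneously makes $f$ square-summable and the source series convergent; everything else is a routine application of the known forward-problem theory and the estimates of Section~2.
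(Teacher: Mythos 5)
Your proposal follows essentially the same route as the paper: solve (\ref{EqFor_fk1}) for $f_k$, use the lower bound of Lemma \ref{invvv1} together with estimate (\ref{Ml}) and the hypotheses $\psi\in D(A)$, $\varphi\in H$ to get $f\in H$, defer the regularity of $u(t)$ to the known forward-problem theory, and obtain uniqueness from $p_{k,\rho}(T)\neq 0$ via the homogeneous problem. The only difference is cosmetic: you sketch the termwise verification of $Au(t)\in C((0,T];H)$ that the paper simply cites from \cite{AshM2}, \cite{AshM}.
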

Now we form the following corresponding result for the case when sign of function $g(t)$ has changed.
\begin{thm}\label{thm2} Let $\rho\in (0,1]$, $\varphi \in H$, $\psi \in D(A)$, $g(t)\in C^1[0,T]$. Further, we will assume that the conditions of Lemma \ref{lemmaSub} are satisfied and $T$ is sufficiently small.
		
		1) If set $B_{0,\rho}$ is empty, for all $k$, then there exists a unique solution of the inverse problem (\ref{prob1})-(\ref{ad}):
		\begin{equation*}
		 f=\sum\limits_{k=1}^\infty \frac{1}{p_{k,\rho}(T)}\left[ \psi_k-\varphi_k TE_{\rho,2}(-\lambda_k  T^\rho)\right]v_k,   
		\end{equation*}
	\begin{equation*}
	u(t)=\sum\limits_{k=1}^\infty \left[\varphi_k E_{\rho,1}(-\lambda_kt^\rho)+ \frac{p_{k,\rho}(t)}{p_{k,\rho}(T)}\left[ \psi_k-\varphi_k TE_{\rho,2}(-\lambda_k  T^\rho)\right]\right]v_k.    
	\end{equation*}		
		2) If set $B_{0,\rho}$ is not empty, then for the existence of a solution to the inverse problem, it is necessary and  sufficient that the following  conditions
		\begin{equation}\label{ortogonal}
			\psi_k=\varphi_k T E_{\rho,2}(-\lambda_kT^\rho),\quad k\in B_{0,\rho}
		\end{equation}
		 be satisfied. In this case, the solution to the problem (\ref{prob1})-(\ref{ad}) exists, but is not unique:
		\begin{equation}\label{f_2}
		   f=\sum\limits_{k\in B_{\rho}} \frac{1}{p_k(T)}\left[ \psi_k-\varphi_k TE_{\rho,2}(-\lambda_k  T^\rho)\right]v_k+\sum\limits_{k \in B_{0,\rho}} f_k v_k, 
		\end{equation}
		\begin{equation}\label{u_2}
		u(t)=\sum\limits_{k=1}^\infty\big[\varphi_k E_{\rho,1}(-\lambda_kt^\rho)+f_k\big]v_k,    
		\end{equation}
where $f_k$, $k\in B_{0,\rho}$, are arbitrary real numbers.
 \end{thm}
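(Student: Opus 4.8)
The plan is to reduce the whole inverse problem to the scalar relations (\ref{EqFor_fk1}), which were already obtained by substituting the forward solution (\ref{fp}) into the additional condition (\ref{ad}) and using Lemmas \ref{lem1} and \ref{lem2}, and then to solve these relations coordinatewise, treating the indices $k\in B_\rho$ and $k\in B_{0,\rho}$ separately. The logical backbone is that any admissible pair $\{u(t),f\}$ automatically solves the forward problem (\ref{prob1})--(\ref{in.c}), hence $u$ is represented by (\ref{fp}) with $f_k=(f,v_k)$, and so the coefficients of $f$ are \emph{forced} to satisfy $f_k\,p_{k,\rho}(T)=\psi_k-\varphi_k T E_{\rho,2}(-\lambda_k T^\rho)$ for every $k$. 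Everything then becomes a question of solvability of these equations together with a convergence check.

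For Part 1, the set $B_{0,\rho}$ is empty, so $p_{k,\rho}(T)\neq0$ for every $k$ and each $f_k$ is uniquely obtained by division, giving both stated series. The substantive step is to verify $f\in H$. Using the lower bound $|p_{k,\rho}(T)|\geq C_0/\lambda_k$ from Corollary \ref{Krho}, I estimate $|f_k|\leq C_0^{-1}\lambda_k\,|\psi_k-\varphi_k T E_{\rho,2}(-\lambda_k T^\rho)|$. The hypothesis $\psi\in D(A)$ controls $\sum\lambda_k^2|\psi_k|^2<\infty$, while the Mittag-Leffler bound (\ref{Ml}) gives $\lambda_k T\,|E_{\rho,2}(-\lambda_k T^\rho)|\leq C T^{1-\rho}$ uniformly in $k$ (since $x/(1+x)\leq1$ with $x=\lambda_k T^\rho$), so the $\varphi$-term is dominated by $C|\varphi_k|$ and is square-summable because $\varphi\in H$. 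Hence $\sum|f_k|^2<\infty$, i.e. $f\in H$. Feeding this $f\in H$ and $\varphi\in H$ back into the forward problem, whose well-posedness is quoted from the cited works, yields $u(t)$ with the regularity demanded in Definition \ref{def1}, and uniqueness is immediate because every $f_k$ was forced.

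For Part 2 the dichotomy in (\ref{EqFor_fk1}) for $k\in B_{0,\rho}$ (where $p_{k,\rho}(T)=0$) governs everything. Necessity: any solution must satisfy (\ref{EqFor_fk1}), and for $k\in B_{0,\rho}$ the left-hand side vanishes, forcing $\psi_k=\varphi_k T E_{\rho,2}(-\lambda_k T^\rho)$, which is exactly (\ref{ortogonal}). Sufficiency: assuming (\ref{ortogonal}), equation (\ref{EqFor_fk1}) holds automatically for $k\in B_{0,\rho}$ with both sides zero, so those $f_k$ stay free, while for $k\in B_\rho$ they are determined and square-summable exactly as in Part 1. Since $T$ is small, Corollary \ref{Krho1} guarantees $B_{0,\rho}$ is finite, so the free block $\sum_{k\in B_{0,\rho}}f_k v_k$ is a finite sum and lies in $H$ for any choice of the $f_k$; adding it neither spoils $f\in H$ nor the regularity of the corresponding $u$ given by (\ref{f_2})--(\ref{u_2}), and the finitely many free parameters display the non-uniqueness.

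The main obstacle I anticipate is not the algebra of (\ref{EqFor_fk1}) but the analytic verification that the constructed series for $u(t)$, $D_t^\rho u(t)$ and $Au(t)$ converge in the spaces required by Definition \ref{def1}. This is where the two-sided estimate (\ref{estimateSub}) and the Mittag-Leffler decay bounds (\ref{Ml}), (\ref{MLA}) do the real work: the lower bound $|p_{k,\rho}(T)|\geq C_0/\lambda_k$ is precisely what absorbs the factor $\lambda_k$ needed to transfer the $\psi\in D(A)$ regularity onto $f$, and the smoothing decay of $E_{\rho,1}(-\lambda_k t^\rho)$ is what yields $Au(t)\in C((0,T];H)$ for $t>0$ even though $\varphi$ and $f$ lie only in $H$. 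Once the forward-problem regularity is invoked on the determined $f$, both parts of the theorem follow from the coordinatewise reasoning above.
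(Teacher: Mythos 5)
Your proposal is correct and follows essentially the same route as the paper: reduce everything to the coordinatewise equations (\ref{EqFor_fk1}), solve for $f_k$ by division on $B_\rho$ using the lower bound $|p_{k,\rho}(T)|\geq C_0/\lambda_k$ from Lemma \ref{lemmaSub} together with $\psi\in D(A)$ and the Mittag-Leffler bound (\ref{Ml}) for convergence, read off the solvability condition (\ref{ortogonal}) from the vanishing of $p_{k,\rho}(T)$ on $B_{0,\rho}$, and defer the regularity of $u(t)$ to the forward problem. Your explicit appeal to Corollary \ref{Krho1} to keep the free block $\sum_{k\in B_{0,\rho}}f_kv_k$ finite is a point the paper leaves implicit, but it does not change the argument.
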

As mentioned earlier, Theorem \ref{thmNotChange} for the diffusion equation ($\rho=1$) with the additional condition $u(x,t_0)=\psi$ has only been proven in the cases where $\Omega$ is an interval on $\mathbb{R}$ (see, \cite{Sab}) or a rectangle in $\mathbb{R}^2$ (see, \cite{Sab2}). The inverse problem (\ref{prob1})-(\ref{in.c}) with the same additional condition, considering both the cases when the function $g(t)$ changes sign and when it does not change sign, has been addressed in the work of Ashurov et al. (see, \cite{AshM2}). However, the theorems we have presented above, for both the diffusion and subdiffusion equations, involve an integral additional condition (\ref{ad}). It is worth noting that these theorems are also novel for diffusion equations. Besides, we must also note that, unlike the paper \cite{AshM2}, in the theorems we have proven, the condition is given not to point $t_0$, but to the boundary of the domain i.e $T$.

\textbf{Proof of Theorem \ref{thmNotChange}.}
  Since $p_{k,\rho}(T)\neq 0 $ for all $ k \in \mathbb{N}$, then we get the following equations from (\ref{EqFor_fk1}):
\begin{equation*}
f_k=\frac{1}{p_{k,\rho}(T)}\left[ \psi_k-\varphi_kTE_{\rho,2}(-\lambda_k  T^\rho)\right].
\end{equation*}

From these $f_k$ are Fourier coefficients of the unknown $f$, has the form:
\begin{equation}\label{inv10}
f=\sum\limits_{k=1}^\infty \frac{1}{p_{k,\rho}(T)}\left[ \psi_k-\varphi_kTE_{\rho,2}(-\lambda_k  T^\rho)\right]v_k.
\end{equation}

Let us prove the uniformly convergence of this series.

Let $F_j$ be the partial sum of series (\ref{inv10}):
\[F_j=\sum\limits_{k=1}^j \frac{1}{p_{k,\rho}(T)}\left[ \psi_k-\varphi_kTE_{\rho,2}(-\lambda_k  T^\rho)\right]v_k= F_{j,1}+F_{j,2}.\]
Then we show that every series $F_{j,1}$ and $F_{j,2}$ are absolutely and uniformly convergent.

First we estimate of the series $F_{j,1}$. For this, applying Parseval's equality, we arrive at:
\begin{equation*}
  ||F_{j,1}||^2=\bigg|\bigg|\sum\limits_{k=1}^j \frac{\psi_k}{p_{k,\rho}(T)}v_k\bigg|\bigg|^2 \leq \sum\limits_{k=1}^j \frac{1}{|p_{k,\rho}(T)|^2} |\psi_k|^2\leq C\sum\limits_{k=1}^j\lambda_k^2|\psi_k|^2 = C ||\psi||^2_1.  
\end{equation*}

Now, we estimate of the series $F_{j,2}$. According to Parseval's equality and estimate (\ref{Ml}), we have:
\begin{equation*}
 ||F_{j,2}||^2=\bigg|\bigg|\sum\limits_{k=1}^j \frac{\varphi_kTE_{\rho,2}(-\lambda_k  T^\rho)}{p_{k,\rho}(T)}v_k\bigg|\bigg|^2 \leq \sum\limits_{k=1}^j \left|\frac{TE_{\rho,2}(-\lambda_k  T^\rho)}{p_{k,\rho}(T)}\right|^2 |\varphi_k |^2\leq  C ||\varphi||^2. 
 \end{equation*}

Thus, if $\varphi \in H$, $\psi \in D(A)$, then from estimates of $F_{i,j}$ we obtain $f \in H$.

If $f\in H$ is known function, then we obtained the following equality for function $u(t)$: 
\begin{equation}\label{inv11}
u(t)=\sum\limits_{k=1}^\infty \left[\varphi_k E_{\rho,1}(-\lambda_kt^\rho)+ \frac{p_{k,\rho}(t)}{p_{k,\rho}(T)}\left[ \psi_k-\varphi_k TE_{\rho,2}(-\lambda_k  T^\rho)\right]\right]v_k.
\end{equation}

From this equality, we have the following form for Fourier coefficients $u_k(t)$ of function $u(t)$:
\begin{equation*}
u_k(t)=\varphi_k E_{\rho,1}(-\lambda_kt^\rho)+ \frac{p_{k,\rho}(t)}{p_{k,\rho}(T)}\left[ \psi_k-\varphi_k TE_{\rho,2}(-\lambda_k  T^\rho)\right].
\end{equation*}

Now we need to show that function $u(t)$ is a solution of inverse problem (\ref{prob1})-(\ref{ad}). Fulfillment of the conditions of Definition \ref{def1} for function $u(t)$, defined by the series (\ref{inv11}) is proved in exactly the same way as the solution of the forward problem (\ref{prob1}). As we noted above, the solution to the forward problem was proved in papers \cite{AshM2}, \cite{AshM}.

The uniqueness of the solution was proved in paper \cite{AshM2}. Therefore, we briefly cite the proof of the uniqueness.

To prove the uniqueness of the solution, assume the opposite, that is, there are two different solutions $\{u_1, f_1\}$ and $\{u_2, f_2\}$ satisfying the inverse problem (\ref{prob1} )-(\ref{ad}). We must show that $u\equiv u_1-u_2 \equiv 0$, $f\equiv f_1-f_2\equiv 0$. For $\{u, f\} $ we have the following problem::
 \begin{equation}\label{prob20}
\left\{
\begin{aligned}
& D_t^\rho u(t)+A u(t) =g(t)f,\quad t\in (0,T],\\
&u(0)=0, \\ 
&\int\limits_0^T u(t)dt=0.\\
\end{aligned}
\right.
\end{equation}
We take any solution $\{u,f\}$ and define $u_k=(u,v_k)$ and $f_k=(f,v_k)$. Then, due to the self-adjointness of the operator $A$, we obtain
\[
D_t^\rho u_k(t)= (D_t^\rho u, v_k)= -(A u, v_k)+f_k g(t)=-( u,A v_k)+f_k g(t)=-\lambda_k u_k(t)+f_k g(t).
\]
Therefore, for $u_k$ we obtain the Cauchy problem

\[
D_t^\rho u_k(t)+\lambda_k u_k(t) =f_kg(t),\quad t>0,\quad u_k(0)=0,
\]
and the additional condition
\[
 \int\limits_0^T u_k(t)dt=0.
\]
If $f_k$ is known, then the unique solution of the Cauchy problem has the form
\[
u_k(t)= f_k\int\limits_0^t \eta^{\rho-1} E_{\rho, \rho} (-\lambda_k \eta^\rho) g(t-\eta) d\eta.
\]
Apply the additional condition to get
\[
\int\limits_0^T u_k(t)dt= f_k\int\limits_0^T g(\eta) (T-\eta)^{\rho} E_{\rho, \rho+1} (-\lambda_k  (T-\eta)^\rho )  d\eta=f_kp_{k,\rho}(T)=0.
\]
Since $p_{k,\rho}(T) \neq 0$ for all $k \in \mathbb{N} $, then due to completeness of the set of eigenfunctions $\{v_k\}$ in $H$, we finally have  $f\equiv 0$ and  $u(t)\equiv0$.
 \hfill$\Box$ \
 
 We will now proceed with the proof of Theorem \ref{thm2}.

\textbf{Proof of Theorem \ref{thm2}.}
We will consider the proof of the theorem for cases where the set $B_{0,\rho}$ is empty and non-empty.

When $p_{k,\rho}(T)\neq 0$ for all $k$, we can prove the existence and uniqueness of the solution of functions ${\{u(t),f\}}$ in the same way as in Theorem \ref{thmNotChange}.

Next, we consider the case where $B_{0,\rho}$ is not an empty set. If $k\in B_{\rho}$, we can use Lemma \ref{lemmaSub} to prove the first part of equalities (\ref{f_2})-(\ref{u_2}) in the same way as the existence of a solution was proved in Theorem \ref{thmNotChange}.
However, when $k\in B_{0,\rho}$, the solution of equation (\ref{EqFor_fk1}) with respect to $f_k$ exists if and only if the extra conditions (\ref{ortogonal}) are satisfied. The solution of equation (\ref{EqFor_fk1}) in this case can be arbitrary numbers $f_k$. 

Instead of condition (\ref{ortogonal}), according to $0<E_{\rho,2} (-t)<1$, (see \cite{Gor}, p. 47) we can use the orthogonality conditions which are easy to verify:

\[\varphi_k=(\varphi, v_k)=0, \quad \psi_k=(\psi, v_k)=0, \quad k\in B_{0,\rho}.\]
 
\hfill$\Box$ \
\section*{Acknowledgements}
 The author is grateful to R.R. Ashurov for discussions of these results. The author acknowledges financial support from the  Ministry of Innovative Development of the Republic of Uzbekistan, Grant No F-FA-2021-424.


\end{document}